\newtheorem*{theorem}{Theorem}
\title{\bf 
A 6-chromatic odd-distance graph \\ in the plane
}
\author{\bf 
\textcolor[rgb]{.6,.3,.0}{Jaan Parts} \\
} 
\date{\normalsize \textcolor[rgb]{.6,.3,0}{Kazan, Russia, jaan\_parts@.mail.ru}}
\begin{document}

\maketitle

\pagestyle{empty}
\thispagestyle{empty}

\begin{abstract}
Two vertices of  an odd-distance graph are connected by an edge if and only if their Euclidean distance is an odd integer. We construct a 6-chromatic odd-distance graph in the plane.
\end{abstract}

\section{Introduction}

\paragraph{Motivation.}
This work was motivated by Marijn Heule (and partly by Dire Straits with their song "Money for Nothing\,"). He proposed \cite{heu2} to construct a 6-chromatic\footnote{
Hereinafter, the numeric prefix means the number of colors $k$.}
\textit{odd distance graph} (ODG), that is, a graph in which all pairs of vertices at an integer odd distance from each other are connected by an edge.  According to Marijn, this might help find a 6-chromatic \textit{unit distance graph} (UDG) in the plane.

Such a proposal is based, on the one hand, on his observation that adding edges of odd length speeds up the computation of the \textit{chromatic number} $\chi$ of some large (hard) UDGs. On the other hand, it is known that, for a \textit{proper} 4-\textit{coloring} of some UDGs embedded in $\mathbb{Q}(\sqrt{-3}, \sqrt{-11})$, so-called \textit{virtual edges} naturally appear between the vertices at an odd distance from each other and behave similarly to real edges: they forbid these vertices to have the same \textit{color}.

But there is also a reason to doubt that it will help. So far, virtual edges have only been observed in 4-coloring (but not in 5-coloring). Also, the 6-chromatic \textit{two-distance} graphs have been known for quite some time (see \cite{exoo}, \cite{par}), but so far this doesn't provide any clues for UDGs.

In our opinion, the main difficulty in finding UDGs with $\chi\ge 6$ is associated primarily with the high computational complexity of determining of $\chi$. The traditional tool is the SAT-based approach \cite{heu1}, in which information about the structure of a graph and the possibility of its $k$-coloring is translated into a \textit{propositional formula} (CNF format), which is checked by the SAT-\textit{solver}. If the graph has chromatic number $\chi$, then the formula is \textit{satisfiable} for $k=\chi$ colors ($k$-SAT solution) and is \textit{unsatisfiable} for $k=(\chi-1)$ colors ($k$-UNSAT solution). However, with an increase in the graph order and the number of colors $k$, the problem becomes more and more difficult, and SAT-solvers no longer cope with it in a reasonable time (we will denote such an indetermined result as $k$-INDET).

\paragraph{Notation.}
We use the following notation. Graphs are denoted by a capital italic letters (subscripts indicate the number of vertices), preceding numbers and lowercase italic letters indicate complex rotation and scaling multipliers. The sign "$+$" denotes a union of graphs, the symbol "$\oplus$" denotes the \textit{Minkowski sum}, and $\oplus^n G$ is the Minkowski sum of $n$ identical graphs $G$. The notation $[G, a]$ indicates the \textit{trimming} of the graph $G$ by discarding all vertices outside the disk of radius $a$. 

All considered graphs are embedded in the plane, are completely determined by their vertex coordinates, and have edges of odd length. 

For example, $i\,G$ and $-G$ mean the rotation of the graph $G$ around the \textit{origin} $(0, 0)$ by the angle $\pi/2$ and $\pi$;\; $8\,G$ means scaling up all vertex coordinates of $G$ by $8$ times;\; $\oplus^3 G= G \oplus G \oplus G$. Some base graphs have dedicated symbols:\; $T=\{(0, 0), (1, 0), (1/2, \sqrt3/2)\}$ is a 3-vertex triangular graph, $H=T \oplus (-T)=\{(0, 0), (\pm 1, 0), (\pm1/2, \pm\sqrt3/2)\}$ is a 7-vertex wheel graph. An \textit{element} of a graph is a subgraph that can be entirely embedded in a \textit{unit hexagonal} (or triangular) \textit{lattice} $L=\oplus^{\infty} H=\{n\, (1, 0)+m\, (1/2, \sqrt3/2); \; n, m\in\mathbb{Z}\}$.

\paragraph{Background.}
A 4-chromatic ODG requires at least 7 vertices \cite{heu2}, just like the UDG.

Ardal et al. showed \cite{ard} that the unit hexagonal lattice $L$ as an ODG has chromatic number $\chi=4$, moreover, in $L$, they constructed an 11-vertex subgraph $G_{11}$, using edges with four lengths $\{1, 3, 5, 7\}$ and containing a \textit{monochromatic pair} of vertices with distance 8 in a 4-coloring. This, by forming a \textit{spindle} (two mono-pairs with a common vertex and with two remaining vertices connected by an edge, which we call a \textit{rotation edge}), leads to a 21-vertex graph $G_{21}=G_{11}+\rho \, G_{11}$ with $\chi=5$. Hereinafter the symbol $\rho$ denotes any suitable rotation (any admissible rotation edge of odd length $r$ will do).

\paragraph{Starting point.}
It is natural to look for a 6-chromatic ODG among the graphs containing many copies of the graph $G_{21}$. 

We quite quickly established that the graph $G_{48289}=\oplus^2 (\oplus^8 H + \oplus^8 \rho H)= \oplus^2 (G_{217} + \rho\, G_{217})$ has $\chi\ge 6$. Further work consisted in the \textit{reduction} (of the order) of this graph, clarification of its key elements and generalization of the construction.

\section{Analysis}

\paragraph{Initial checks.}

First of all, note that we don't know the exact value of $\chi$ of 48289-vertex ODG, since SAT-checks with $k\ge 6$ colors were unsuccessful ($k$-INDET). Namely
, we tried $k\in\{6, 7, 8\}$. We'll come back to this question at the end, but for now let's see what else we found out.

We tried to reduce the number of different odd distances, keeping $\chi\ge 6$. It turned out that it is sufficient to use edges with lengths $\{1, 3, 5, 7\}$, as in the case of  Ardal’s graph \cite{ard}. A reduction in the set of permissible distances led to an increase in the computation time by about 10 times (from 1.5 to 15 hours), which is quite expected.

We also tried trimming of $G_{48289}$ (without limiting the set of possible distances). On this path, we managed to reduce the radius to 7.5 ($G_{26299}$). The subgraph $G_{24103}$ on the disk with radius 7 gave 5-INDET.

\paragraph{Tools.}

As SAT-solvers, we used together a pair of programs {\tt Glucose} and {\tt Yalsat}. The latter usually (but not always) copes better with the SAT solution, the latter with UNSAT. To reduce the graphs, we used the program {\tt DRAT-trim}. As input, {\tt DRAT-trim} uses a proof file (so-called \textit{certificate}) emitted by {\tt Glucose} during the solution and extracts \textit{unsatisfiable core}, that is, a subgraph of the original graph \cite{heu1}.

The calculations were performed mainly on a computer with processor Intel Core i5-9400F, 2.9 GHz, 6 cores/6 threads, 16 GB RAM. Basically, single-threaded versions of programs were used, which made it possible to perform several checks in parallel, as well as obtain a certificate file.

In our study, {\tt Glucose} spent 1 to 4 hours to process one average graph (UNSAT). The processing terminated forcibly not earlier than after 24 hours (INDET). 
Here lies one of the drawbacks of the SAT-solvers we know: they do not help us even approximately predict how long it would take them to complete the calculations, one day or one year. Termination timeout is a blind choice based on some experience. 


Reduction with {\tt DRAT-trim} is resource intensive. Certificate processing time was on average 4 times longer than its creation in {\tt Glucose}. The certificate files were huge and grew by an average of 2 GB per hour. Thus, using several threads, it is possible to fill all the available computer memory in a day or two in especially difficult cases.

\paragraph{Reduction.}

The very first attempts to use {\tt DRAT-trim} allowed us to decrease the number of vertices to about one and a half thousand, but further progress was much slower. We tried a series of reductions, partially restoring the rotational symmetry of the graph at intermediate stages, but this did not improve, but rather worsened the result. The reason for this became clear later, when we got a better understanding of the structure of the graph: it is simply not internally symmetrical about the origin (lattice node).

With the help of {\tt DRAT-trim} we were able to reduce the number of vertices to 1123. For further reduction, we gradually discarded the vertices that are farthest from the origin and/or have the smallest vertex degree. As a result, we got a 495-vertex graph $G_{495}$. This was already enough to analyze the structure of 6-chromatic ODGs.

\paragraph{Graph elements.}

We split the 495-vertex graph into subgraphs, each of which can be embedded (separately) in the hexagonal lattice $L$. As a result, we obtain a base subgraph (\textit{core}) and 8 oblique subgraphs (\textit{rotors}), having a common vertex with the core. We call the set of all such common vertices a \textit{frame}. It is clear, that the rest of the rotor vertices are also interconnected by many copies of the frame (or its subgraphs). The rotors are also connected to the core by several (from 3 to 6) rotation edges.

The graph $G_{495}$ includes a 197-vertex core and 8 rotors: 2 central (with 66 and 41 vertices) and 6 peripheral (from 30 to 35 vertices each).

Fig.~\ref{g495} shows some elements of $G_{495}$: a core with the frame vertices highlighted on it and two rotors, one each from the center and periphery of the graph. One can see the symmetry of both the frame (to restore it, it is enough to add another central rotor) and rotors (the asymmetry of the peripheral rotor is explained by the fact that we used trimming during reduction).

A 1123-vertex graph $G_{1123}$ contains similar (but larger and not truncated) rotors and, at a unit distance from them, 6 additional rotors with the number of vertices from 18 to 29, as well as many smaller ones.

\begin{figure}[H]
\centering
\includegraphics[scale=0.3]{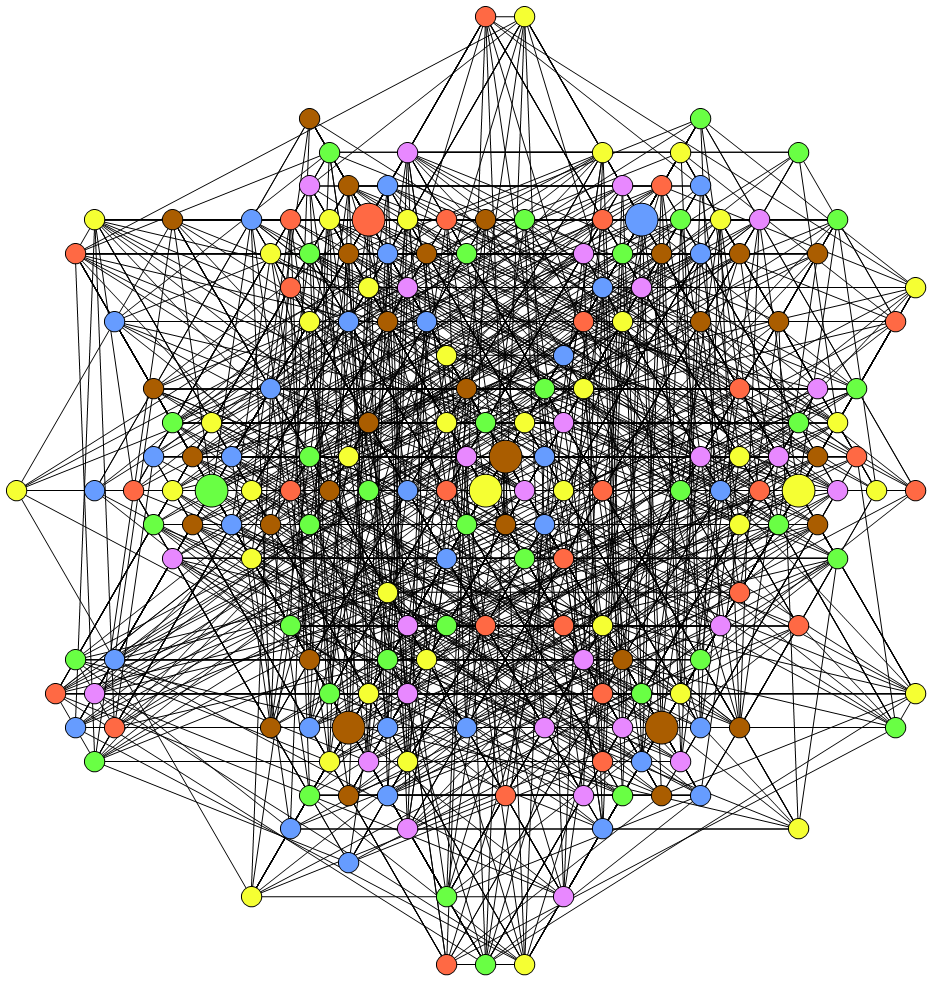} 
\includegraphics[scale=0.3]{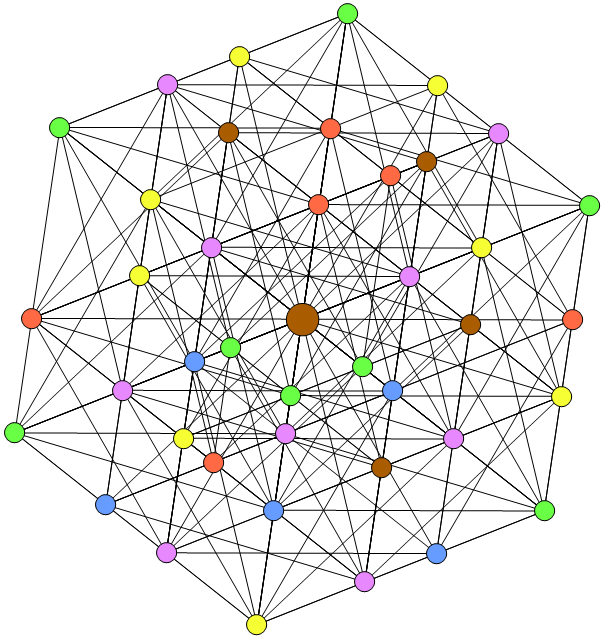} \ \ \ \
\includegraphics[scale=0.3]{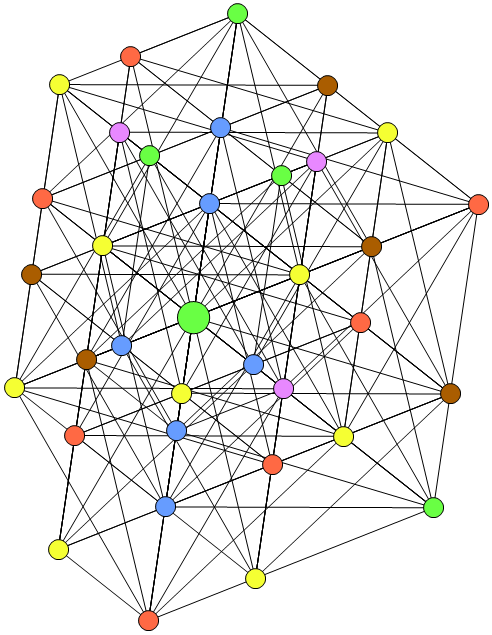} 
\caption{Some subgraphs of 6-chromatic 495-vertex odd-distance graph 
in 6-coloring: core (shown above) and rotors (below). Scale and orientation ($r=3$) are retained. The frame vertices of the core are enlarged.}
\label{g495}
\end{figure}

\section{Synthesis}

\paragraph{Main construction.}

At the stage of synthesis based on the reduced graph, we tried to guess the more optimal form of the graph elements, gradually removing vertices and adding new ones. These elements should be small, symmetrical and simple to describe.

As a result, we settled on a 306-vertex graph that includes the 9-vertex frames $F=(-7\,T)\oplus 8\,T$, the 36-vertex core $C=F\oplus 8H$, and the 31-vertex rotors $R=[\,3H\oplus 8H,\; 8\,]$. The rotation edges can be of any odd length from 1 to 15.

\paragraph{Generalization options.}

After that we tried to take other combinations of integer distances, keeping the number of vertices and the description form of the elements.

It turned out that some frames in the generalized form $F=n T\oplus m T$ with even $m$ and odd $n$ are also workable, that is, they give ODG with $\chi=6$. The same was observed for rotors 
$R=[\,n H\oplus m H,\; \max(n, m)\,]$.

We went through many pairs $(m, n)$, and empirically established the following regularity: in 6-chromatic ODGs, the frame necessarily contains, in addition to equilateral triangles with edges of odd length $n$ and $(n+m)$, two \textit{oblique} triangles with edges of odd length $s$, the vertices of which lie on the edges (or their extension) of the $(n+m)$-triangle (see Fig.~\ref{proof}~and~\ref{frame}).
In other words, triangles with edges of length $\{m, n, s\}$\footnote{
Abbreviations from mono- and non-mono-chromatic vertex pair, and skew edge.}
are realized at the points of the lattice $L$. Rotors contain similar triangles (Fig.~\ref{rotor}).

We do not know why this rule works, but to find all possible variants of working pairs $(m, n)$, we need the following

\begin{theorem}
For any integer-distance triangle with vertices at the points of the unit hexagonal lattice and edges of both even and odd length, only one of which is oriented obliquely to the base vectors of the lattice:

1. Only one edge has an even length $m$, and this edge is not oblique.

2. The length $m$ is a multiple of $8$.

3. The other two edges are of odd length, described by the expressions $s=12/t \cdot (m/8)^2+t$ and $n=(m/2) \pm (12/t \cdot (m/8)^2-t)$ for integer $t$.
\end{theorem}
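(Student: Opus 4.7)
The plan is to place the triangle in lattice coordinates, derive the Diophantine relation $s^2 = r^2 + rn + n^2$ among the three edge lengths (with $r,n$ the two non-oblique edges and $s$ the oblique one), and then analyze it by parity arguments (for parts 1--2) and by a rectangle factorization (for part 3).

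\textbf{Setup.} First I would place the unique common vertex of the two non-oblique edges at the origin $C=(0,0)$, so that the triangle has $A = n\mathbf{v}$ and $B = r\mathbf{u}$, where $\mathbf{u},\mathbf{v}$ are two distinct unit lattice directions. Then $|CA|=|n|$ and $|CB|=|r|$ are integers, and the law of cosines gives $s^2 := |AB|^2 = r^2 + rn + n^2$ (signed $r,n$ absorb the $60^\circ$ vs $120^\circ$ ambiguity in the choice of two lattice directions). Obliqueness of $AB$ amounts to $r,\ n,\ r+n$ all being nonzero.

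\textbf{Parts 1 and 2.} For part 1 I would reduce $s^2 = r^2 + rn + n^2$ modulo $2$ and exhaust the four parity cases for $(r,n)$: both even or both odd each force all three edges to share a parity, violating the hypothesis that both parities appear; the remaining mixed case forces $s$ odd with exactly one of $r,n$ even. Thus the unique even edge is non-oblique; call its length $m$ and WLOG take $m = r$, with $n,s$ odd. For part 2, rewrite $s^2 - n^2 = r(r+n)$. Since $s,n$ are odd, $s^2 \equiv n^2 \equiv 1 \pmod 8$, so the left side is divisible by $8$; on the right, $r+n$ is odd (even plus odd), so the $2$-adic valuation of $r(r+n)$ equals that of $r$. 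Hence $8 \mid r = m$, and I put $m = 8k$.

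\textbf{Part 3.} With $r = 8k$ the relation becomes $(s-n)(s+n) = 8k(8k+n)$. Substituting integers $u = (s-n)/2$, $v = (s+n)/2$ and $n = v - u$, the identity $4uv = 8k(8k + v - u)$ rearranges, after completing a rectangle, to
\[
(u - 2k)(v + 2k) = 12 k^2.
\]
For each divisor $t$ of $12k^2$, the two assignments $\{u - 2k,\ v + 2k\} = \{t,\ 12k^2/t\}$ yield $s = u + v = t + 12k^2/t$ and $n = v - u = \pm(12k^2/t - t) - 4k$. Taking $|n|$ (and replacing $t$ by $-t$ when convenient) absorbs the extra sign and recovers the theorem's formulas $n = m/2 \pm (12k^2/t - t)$ and $s = 12k^2/t + t$ with $m = 8k$.

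The main obstacle I anticipate is the sign/parity bookkeeping in part 3: confirming that the two factor orderings together with the orientation freedom for $n$ reproduce precisely the stated ``$\pm$'' and exhaust all integer solutions, and checking that the requirement ``$s,n$ both odd'' translates into the clean condition that $t$ and $12k^2/t$ have opposite parities. The algebraic core is routine; the decisive step is the mod-$8$ argument of part 2, which is what forces $8 \mid m$ and ultimately produces the coefficient $12$ in the final parametrization.
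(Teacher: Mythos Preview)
Your argument is correct and matches the paper's: parts 1 and 2 are the same parity and mod-$8$ arguments applied to $s^2-n^2=m(m\pm n)$. For part 3 the paper drops the altitude of length $h=\sqrt{3}\,m/2$ to introduce $l=n-m/2$ and factorizes $(s-l)(s+l)=3m^2/4$, whereas you complete the rectangle directly to $(u-2k)(v+2k)=12k^2$; these are the same substitution in different dress and lead to the identical divisor parametrization $s=12k^2/t+t$, $n=m/2\pm(12k^2/t-t)$.
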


\begin{proof}
The edge lengths $\{m, n, s\}$ of the triangle are related by the expression $s^2=m^2 \pm m\cdot n+n^2$, where $s$ is the length of the oblique edge. 

If both $m$ and $n$ are even, then $s^2$ is even, hence $s$ is even, which does not satisfy the conditions of the theorem. If $m$ and/or $n$ are odd, then $s^2$ and $s$ are odd, thus statement 1 is proved.

Further, let $m$ be even, $n$ and $s$ odd. Let's rewrite the expression connecting the lengths in the following form: $s^2-n^2=(s-n)(s+n)=m\, (m \pm n)$. After substitutions $s=2a+1$ and $n=2b+1$, $a, b\in\mathbb{Z}$, we get $(s-n)(s+n)=2(a-b)\cdot 2(a+b+1)\in 8\mathbb{Z}$, because for any integer $a$ and $b$, either $(a-b)$ or $(a+b+1)$ is even. Since $(m \pm n)$ is odd, this proves statement 2. 

Now take $m \in 8\mathbb{Z}_{>0}$. Draw the height $h=\sqrt3/2 \cdot m$ to the edge of length $n$, forming two right-angled triangles with edges $\{m, h, m/2\}$ and $\{s, h, l\}$, where $ l=n-m/2$ is odd, since $m/2\in4\mathbb{Z}$ (see Fig.~\ref{proof}).

From the equality $s^2-l^2=h^2$ for a right-angled triangle we obtain $(s-l)(s+l)=3/4\cdot m^2$. Since $s$ and $l$ are odd, and $s>l$, their difference is even $s-l=2\,t$, for some positive integer $t \in \mathbb{Z}_{>0}$. After substitution $l=s-2\,t$ we get $t\,(s-t)=12(m/8)^2$, where $(m/8)\in \mathbb{Z}_{>0}$, whence $s=12/t\cdot (m/8)^2+t$. Similarly, $l=12/t\cdot(m/8)^2-t $. Note that for each pair $(m, s)$ there are two different solutions for $n$ that satisfy the conditions of the theorem: $n_{\pm}=m/2 \pm l=m/2 \pm\sqrt{s^2-3/4\cdot m^2}$.

This completes the proof.

\end{proof}

\begin{figure}[H]
\centering
\includegraphics[scale=0.3]{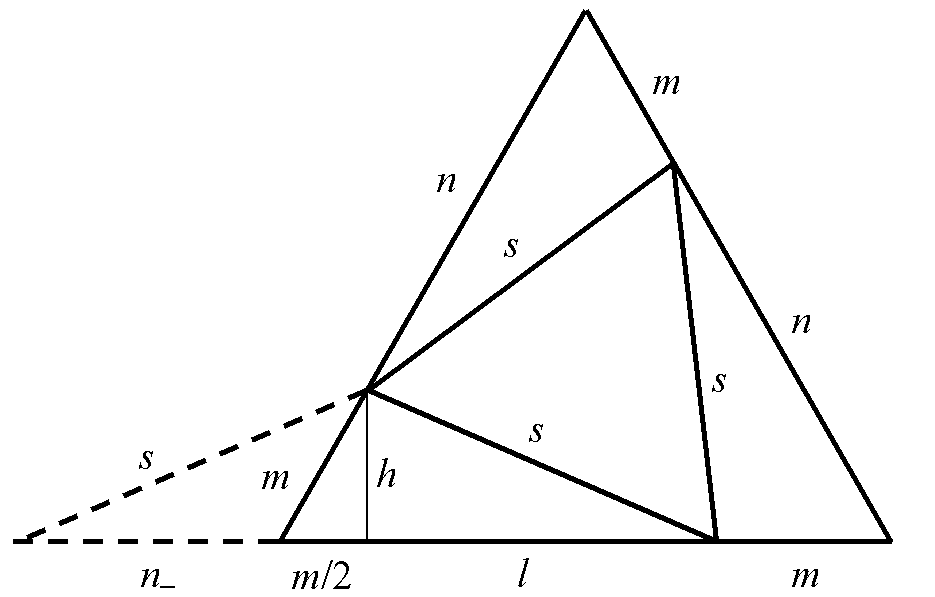} 
\caption{To the proof of the Theorem.}
\label{proof}
\end{figure}

\newpage 

To find all possible pairs $(m, s)$, it is enough to iterate over all admissible values of $m$ and $t$.
Since $m \in 8\mathbb{Z}$, the cases $t=1$ and $t=3$ are always realized and give odd $s=12(m/8)^2+1$ and $s=4(m/8)^2+3$. The cases $t\in\{2, 6, 8, 10, 14, 18, 22, 24,\dots\}$ are never realized, since they give fractional or even values of $s$. The cases $t=4$ and $t=12$ are realized only for $m \in8+16\mathbb{Z}$. Other cases of $t$ can be realized for some $m$.

The allowable values of $n$ and $s$ for some small $m$ are given in Tables \ref{tn} and \ref{ts}. The minimum value of the hypotenuse $s$ is limited by legs $h$ and $l$: $ s\ge\lceil\sqrt{3/4\cdot m^2+1}\rceil$, where $\lceil\cdot\rceil$ denotes rounding up to the nearest odd value, and is achieved for some $m$ (in the Table~\ref{ts} these values are underlined).


\begin{table}[!p]
{
\caption{All valid pairs $(m, n)$ for $m<100$.}
\label{tn}
\smallskip
\footnotesize

\begin{tabular}{@{}cc@{}}
\begin{tabular}{@{\;\;\;\;}r|l@{\:\:}}
\hline
\small{$m$} & \small{$n$}  \\
\hline
\hline

8&$-7, 3, 5, 15$\\
16&$-39, -5, 21, 55$\\
24&$-95, -21, -11, 9, 15, 35, 45, 119$\\
32&$-175, -45, 77, 207$\\
40&$-279, -77, -51, -35, 7, 15, 25,$\\& 33, 75, 91, 117, 319\\
48&$-407, -117, -15, 13, 35, 63,$\\& 165, 455\\
56&$-559, -165, -115, -49, -9, 21,$\\& 35, 65, 105, 171, 221, 615\\

\hline
\end{tabular}
&
\begin{tabular}{@{\;\;\;\;}r|l}
\hline
\small{$m$} & \small{$n$}  \\
\hline
\hline
64&$-735, -221, 285, 799$\\
72&$-935, -285, -203, -63, -33, 27,$\\& 45, 105, 135, 275, 357, 1007\\
80&$-1159, -357, -195, -25, -19, 17,$\\& 63, 99, 105, 275, 437, 1239\\
88&$-1407, -437, -315, -77, -65, 33,$\\& 55, 153, 165, 403, 525, 1495\\
96&$-1679, -525, -135, 11, 85, 231,$\\& 621, 1775\\
\\

\hline
\end{tabular}
\end{tabular}
}
\end{table}

\begin{table}[!p]
{
\caption{All valid pairs $(m, s)$ for $m\le 400$.}
\label{ts}
\smallskip
\footnotesize

\begin{tabular}{@{}cc@{}}
\begin{tabular}{r|l@{\;}}
\hline
\small{$m$} & \small{$s$}  \\
\hline
\hline

8  &\underline{7}, 13\\
16 &19, 49\\
24 &\underline{21}, 31, 39, 109\\
32 &67, 193\\
40 &\underline{35}, 37, 65, 79, 103, 301\\
48 &\underline{43}, 57, 147, 433\\
56 &\underline{49}, 61, 91, 151, 199, 589\\
64 &259, 769\\
72 &\underline{63}, 93, 117, 247, 327, 973\\
80 &73, 91, 95, 245, 403, 1201\\
88 &\underline{77}, 133, 143, 367, 487, 1453\\
96 &91, 201, 579, 1729\\
104 &\underline{91}, 169, 181, 511, 679, 2029\\
112 &\underline{97}, 133, 163, 343, 787, 2353\\
120 &\underline{105}, 111, 127, 133, 155, 195,\\& 237, 309, 545, 679, 903, 2701\\
128 &1027, 3073\\
136 &\underline{119}, 221, 301, 871, 1159, 3469\\
144 &129, 171, 259, 441, 1299, 3889\\
152 &\underline{133}, 247, 373, 1087, 1447, 4333\\
160 &\underline{139}, 217, 335, 965, 1603, 4801\\
168 &\underline{147}, 157, 183, 217, 223, 273,\\& 453, 597, 763, 1327, 1767, 5293\\
176 &169, 209, 379, 539, 1939, 5809\\
184 &\underline{161}, 299, 541, 1591, 2119, 6349\\
192 &283, 777, 2307, 6913\\
200 &\underline{175}, 185, 325, 395, 515, 637,\\& 1505, 1879, 2503, 7501\\
208 &217, 247, 523, 637, 2707, 8113\\
216 &\underline{189}, 279, 351, 741, 981, 2191,\\& 2919, 8749\\
224 &211, 241, 469, 1351, 3139, 9409\\

\hline
\end{tabular}
&
\begin{tabular}{r|l@{\:}}
\hline
\small{$m$} & \small{$s$}  \\
\hline
\hline
232 &203, 377, 853, 2527, 3367, 10093\\
240 &215, 219, 273, 285, 427, 457, 691,\\& 735, 1209, 2165, 3603, 10801\\
248 &217, 403, 973, 2887, 3847, 11533\\
256 &4099, 12289\\
264 &\underline{229}, 231, 341, 399, 429, 511, 1101,\\& 1199, 1461, 3271, 4359, 13069\\
272 &323, 337, 833, 883, 4627, 13873\\
280 &245, 247, 259, 271, 305, 349, 455,\\& 553, 613, 721, 755, 995, 1237,\\& 2107, 2945, 3679, 4903, 14701\\
288 &273, 307, 603, 1737, 5187, 15553\\
296 &259, 481, 1381, 4111, 5479, 16429\\
304 &361, 409, 931, 1099, 5779, 17329\\
312 &273, 277, 403, 507, 543, 703, 1417,\\& 1533, 2037, 4567, 6087, 18253\\
320 &331, 793, 1295, 3845, 6403, 19201\\
328 &287, 533, 1693, 5047, 6727, 20173\\
336 &\underline{291}, 301, 399, 481, 489, 811, 1029,\\& 1339, 2361, 3031, 7059, 21169\\
344 &301, 559, 1861, 5551, 7399, 22189\\
352 &313, 427, 737, 2123, 7747, 23233\\
360 &315, 333, 343, 381, 399, 465, 585,\\& 711, 927, 997, 1235, 1635, 2037,\\& 2709, 4865, 6079, 8103, 24301\\
368 &437, 577, 1127, 1603, 8467, 25393\\
376 &329, 611, 2221, 6631, 8839, 26509\\
384 &1051, 3081, 9219, 27649\\
392 &343, 427, 637, 1057, 1393, 2413,\\& 4123, 7207, 9607, 28813\\
400 &365, 455, 475, 673, 1225, 1891,\\& 2015, 6005, 10003, 30001\\
\hline
\end{tabular}
\end{tabular}
}
\end{table}


\paragraph{Infinite family.}

It follows from the Theorem that, for an arbitrary $m\in 8\mathbb{Z}_{>0}$, it is always possible to implement at least four triangles with different $\{m, n, s\}$. In particular, one can always get a triangle with edges of maximum length $s=12(m/8)^2+1$ and $n=12 (m/8)^2-1+m/2$.

Thus we have an infinite family of 306-vertex 6-chromatic graphs, including graphs with arbitrarily large odd distances.

To show this, it suffices to determine the chromatic number of one graph of the family and demonstrate that the other graphs are isomorphic to it. However, strictly speaking, we have several non-isomorphic subfamilies of 306-vertex graphs, which manifests itself in their different behavior upon further reduction. We did not count the exact number of subfamilies, but in any case there should not be many. The isomorphism of the graphs obtained for $t=1$, starting from some $m$, is quite obvious.

The general formula of the considered family of graphs: 
$$G_{306}=F(m_1, n_1) \oplus \left(m_1 H+\rho(r) R(m_2, n_2)\right),$$
where $F(m_1, n_1)=n_1 T\oplus m_1 T$, \ \ $R(m_2, n_2)=[\,n_2 H\oplus m_2 H, \max(n_2, m_2)\,]$, $n_2>0$, \ $\rho(r)=\exp{(i \arccos\frac{m_1^2+m_2^2-r^2}{2 m_1 m_2})}$, \
$|\,m_1-m_2\,|<r<|\,m_1+m_2\,|$,
and $r$ is the length of the rotation edge. 
That is, a specific graph is uniquely determined by five parameters $\{m_1, m_2, n_1, n_2, r\}$. This gives a rich set of different options for embedding a graph into extensions of the field $\mathbb{Q}$.

\paragraph{Construction details.}

The resulting graph $G_{306}$ contains the core $C$, 9 copies of the rotor $R$, 30 copies of the frame $F$, as well as rotation edges. The core $C$ is necessary: if we replace it with $F$, then the resulting 279-vertex ODG will have $\chi<6$.

The frame $F$ has 9 vertices and 21 edges with lengths $|\,n\,|$ (9 edges), $s$ (6), $|\,m-n\,|$ (3), $|\,m+n\,|$ (3). Note that $n$ can be negative here.

The rotor $R$ has 31 vertices and at least 168 edges with lengths $s$ (60), $|\,n\,|$ (42), $|\,n-m\,|$ (42), $|\,n+m\,|$ (12), $|\,n-2m\,|$ (12). Here $n>0$. Each rotor with $(m, n) = (24\,a, 35\,a)$ and odd $a$ has 12 additional oblique edges of length $s_2=\sqrt{n^2-n\cdot 2m+(2m)^2}$.

The core $C$ has 36 vertices and at least 162 edges with lengths $s$ (48), $|\,n\,|$ (36), $|\,n\pm m\,|$ (24 each), $|\,n\pm 2m\,|$ (12 each), $|\,n\pm 3m\,|$ (3 each). In tested cores, the number of edges did not exceed 192. For example, cores with $m\in 16\mathbb{Z}_{>0}$ have 162 edges. Cores with $(m, n) \in\{(8\,a, 5\,a), (24\,a, 35\,a)\}$ and odd $a$ have 24 additional edges of length $(3m-n)$ and $s_2$, respectively. Cores with $(m, n) = (24\,a, -11\,a)$ and odd $a$ have 30 additional edges of length $\sqrt{n^2+3m^2}$. We also observed additional oblique edges of length $|\,3n\,|$, $|\,n+m\,|$, $|\,n+2m\,|$, $\sqrt{n^2+4n m+7m^2}$, etc.

Of the 162 rotation edges, only one third (54) connects the rotors to the core, the rest (108) connect the rotors to each other.

Additional edges are not necessary, although they are useful for further graph reduction. One can also remove edges of length $|\,n\pm 3m\,|$. Thus, to get a 306-vertex 6-chromatic ODG, it is sufficient to use edges of five lengths: $s$ (768), $|\,n\,|$ (684), $|\,n-m\,|$ (492), $|\,n+m\,|$ (222), $|\,n-2m\,|$ (120), plus 162 rotation edges of any suitable length $r$ of the five listed (2448 edges in total).

\vspace{10mm}

\begin{figure}[H]
\centering
\includegraphics[scale=0.31]{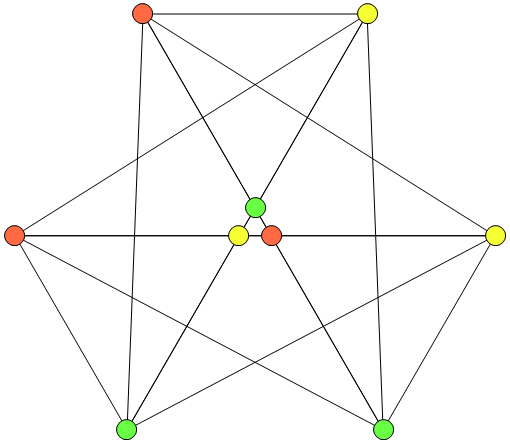} \ \ \ \
\includegraphics[scale=0.31]{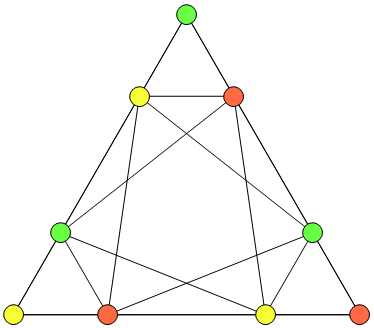} 
\caption{Examples of 9-vertex frame $F$ and their 3-coloring, $(m, n, s)=(8,-7,13)$ and $(8,3,7)$. Scales are the same.}
\label{frame}
\end{figure}

\begin{figure}[H]
\centering
\includegraphics[scale=0.31]{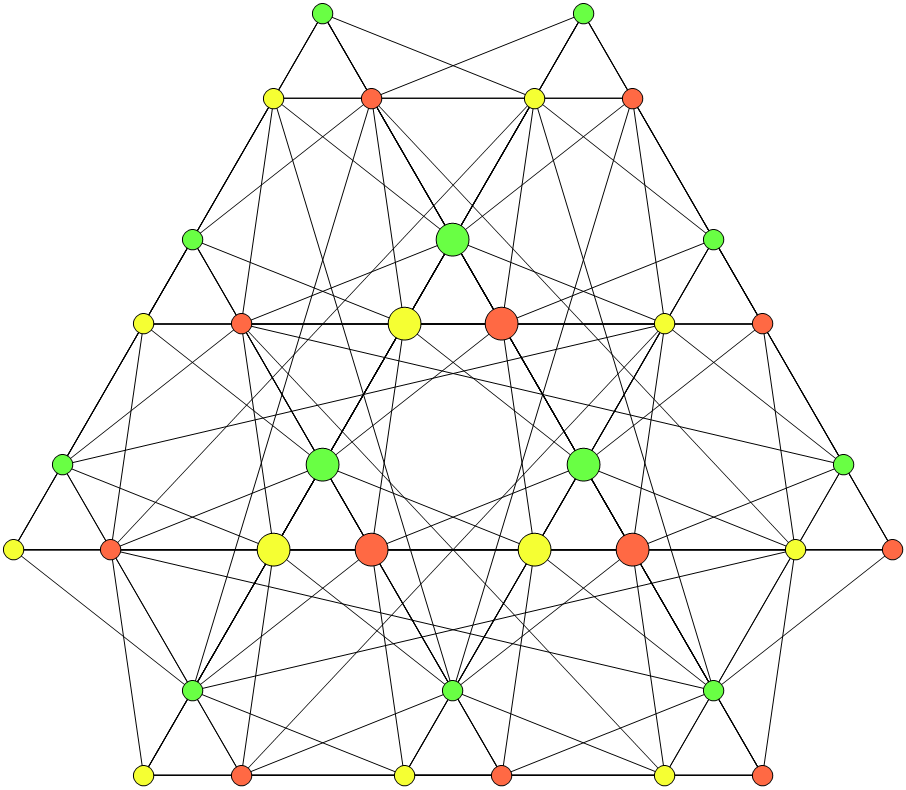} 
\caption{Example of 36-vertex core $C$ and its 3-coloring, $(m, n, s)=(8,3,7)$. Vertices common to the core and rotors are enlarged.}
\label{core}
\end{figure}

\vspace{5mm}

\begin{figure}[H]
\centering
\includegraphics[scale=0.31]{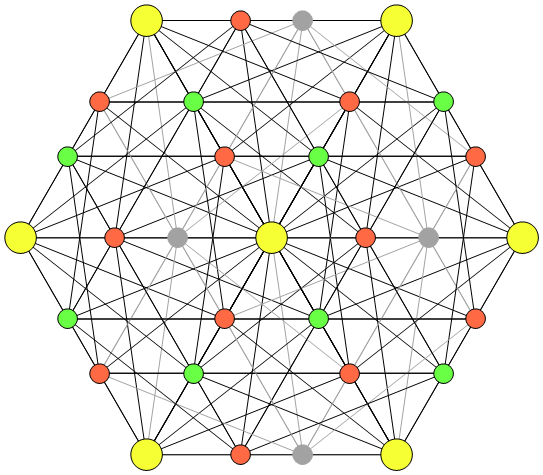} \
\includegraphics[scale=0.295]{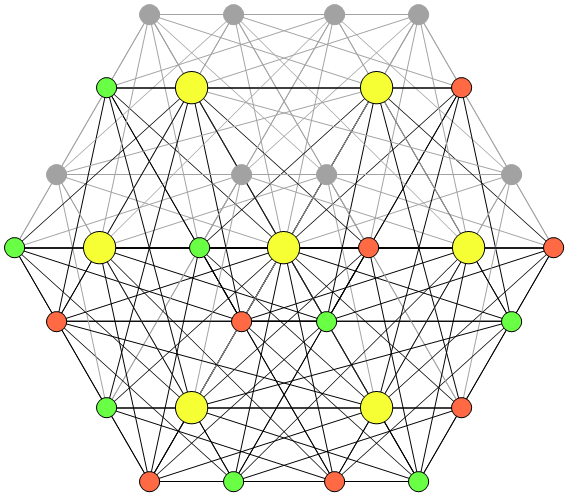} 
\caption{Examples of 31-vertex rotor $R$ and their 3-coloring, $(m,\, n,\, s\, [ , s_2 ])=(8,3,7)$ and $(24,35,31,43)$. Scales are different, rotation is not shown. Vertices connected to the core are enlarged. Vertices that can be removed are grayed.}
\label{rotor}
\end{figure}

\newpage 

\paragraph{Further reduction.}

Removing one vertex of the frame (that is, the entire rotor), as well as replacing the core with the frame, leads to a decrease in the chromatic number. Further reduction of the graph is possible by discarding some vertices of the rotors. We considered only the reduction option in which all rotors are the same, including their orientation.

In different cases, it is possible to remove from 2 to 8 vertices. For $n_2 <2\,m_2$, one can remove 2 or 4 vertices, depending on the parameters of the core. For $n_2> 2\,m_2$, one can discard 4 vertices with any core. The discarded rotor vertices gather in rhombuses with side $m_2$ and do not belong to the subgraph $m_2 H$, all the vertices of which are connected to the core (see Fig.~\ref{rotor}).
The family of rotors with $(m_2, n_2)=(24\,a, 35\,a)$ and odd $a$ allows 8 vertices to be removed with any core. In the latter case, the number of rotor vertices decreases to 23, this gives us a 234-vertex 6-chromatic ODG (see Fig.~\ref{g234}).

\vspace{2mm}

\begin{figure}[H]
\centering
\includegraphics[scale=0.31]{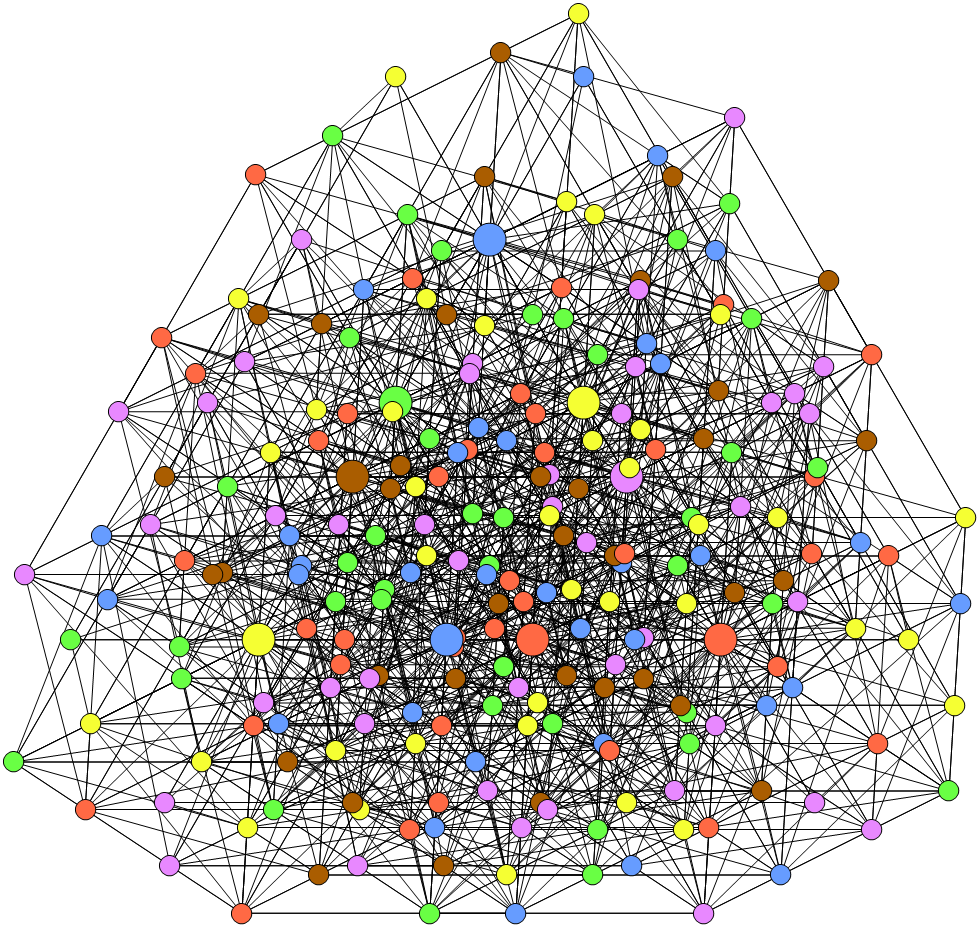} 
\caption{The 6-chromatic 234-vertex odd-distance graph $G_{234}$ and its 6-coloring, $(m,\, n,\, s,\, s_2)=(24,35,31,43)$. Edge lengths are 11, 13, 31, 35, 43, and 59. Frame vertices of the core are enlarged.}
\label{g234}
\end{figure}

\section{Postscripts}

\paragraph{On inverse distances.}

In his article \cite{heu2}, Marijn Heule states: “if an odd distance $d$ is a $k$-virtual edge of a subset $S$ of the Euclidean plane, then distance $1/d$ is a $k$-virtual edge in $S$ as well".

This statement is partially confirmed by pairs of distances of the form $3^{a}$ and $3^{-a}$, $a\in\mathbb{Z}$, which are indeed 4-virtual edges in $\mathbb{Q}(\sqrt{-3}, \sqrt{-11})$. But does this duality work in the general case?

It is not difficult to give a finite counterexample in which this duality is not observed. Take $d=5$ and a 2035-vertex graph $G_{2035}=V\oplus V\oplus H$, where $ V=\sum_{n=-2}^2 \exp(i\,\frac{n}{2} \arccos\frac{5}{6})H$. The distance $d=5$ is obviously a 4-virtual edge here, since none of the 132 pairs of vertices with this distance can be monochromatic in any 4-coloring. The distance $1/d=1/5$ is not realized in $G_{2035}$. In a combined graph containing two copies of $G_{2035}$, shifted relative to each other by $1/5$, none of the 2035 distances $1/5$ is a virtual edge.

In a private correspondence, Marijn clarified that he meant not a finite graph, but larger objects like $\mathbb{Q}(\sqrt{-3}, \sqrt{-11})$. But even in this case, doubts remain about the correctness of the statement. Therefore, a more accurate formulation of the statement and its proof are of certain interest.

\paragraph{Summary.}

We confirmed the existence of 6-chromatic ODGs in the plane, determined that it is enough to use edges of four lengths $\{1, 3, 5, 7\}$, found a simple construction, changing the parameters of which we can get an infinite family of 6-chromatic 306-vertex ODGs, and reduced the required number of vertices to 234. The graph $G_{234}$ is not vertex critical and allows further reduction: at least half of the vertices of the core can be removed painlessly. So, at the time of writing, the smallest 6-chromatic ODG has 216 vertices. Presumably, this is far from the limit.

As the graph order decreases, the number of odd distances required to preserve $\chi=6$ naturally increases: from four for the original 48289-vertex graph to 5 and 6 for 306- and 234-vertex graphs, respectively.

Similarly, adding vertices allows new valid options. For example, when using a 49-vertex rotor $R=n_2 H\oplus m_2 H$, the previously forbidden pair $(m_2, n_2)=(8, 7)$ also leads to a 6-chromatic graph of order 468.

\paragraph{More colors.}

It is quite remarkable that frame $F$, core $C$, and rotor $R$ used in $G_{306}$ have the chromatic number $\chi=3$ (see Fig.~\ref{frame},~\ref{core},~and~\ref{rotor}). Nevertheless, they lead to a graph with $\chi=6$. It is natural to predict that, using the graph elements with $\chi=4$ (which is not difficult), one can get a graph with $\chi>6$. So far we have not succeeded.

Here we ran into serious technical difficulties. So, a 2485-vertex graph $G_{2485} = G_{49} \oplus (m\,H+\rho\,G_{49})$ based on 49-vertex elements of the same form $G_{49}=n\,H \oplus m\,H$ with $\chi=4$ gave 6-INDET. The same result was observed for a similar graph $G_{2527} = G_{49A} \oplus (8\,H+\rho\,G_{49A})$, where subgraph $G_{49A}=[\,(3H+i\sqrt{27}H) \oplus 8\,H,\;8\,]$ contains many copies of Ardal's mono-pair $G_{11}$.
We also could not determine the chromatic number of the 1224-vertex graph $G_{1224} = G_{306} \oplus n\,H$. Moreover, we tried the modified 306-vertex graph with non-integer edges added for testing purposes, and got 6-INDET even for two new distances. That is, for six colors, we stumbled upon the limitation of SAT-solvers.


Not surprisingly, none of the checks of the original 48289-vertex graph with the number of colors $k = 6, 7, 8$ gave determined results.

One could still dig here. And the excavation promises to be impressive, since, according to Soifer’s conjecture \cite{soi}, there are ODGs in the plane with infinitely large $\chi$. However, even for the 7-chromatic ODG, the award has not yet been announced, so this is a pointless exercise. (And here we look at Marijn inquiringly.)

\end{document}